\documentclass[12pt]{amsart}
\usepackage{amsmath,amsthm,amsfonts,amscd,amssymb,eucal,latexsym,mathrsfs, appendix, yhmath, setspace}
\usepackage[numbers,sort&compress]{natbib}
\usepackage[all,cmtip]{xy}
\usepackage{enumerate}
\usepackage{mathptmx}
\usepackage{mathrsfs}
\usepackage{tikz}
\usepackage{verbatim}
\usepackage{url}
\usepackage[all]{xy}
\usepackage{xcolor}
\usepackage[colorlinks=true,citecolor=blue]{hyperref}
\usepackage{multicol}
\theoremstyle{plain}

\setlength{\textwidth}{15cm}
\setlength{\oddsidemargin}{4mm}
\setlength{\evensidemargin}{4mm}

\newtheorem{theorem}{Theorem}[section]
\newtheorem{corollary}[theorem]{Corollary}
\newtheorem{lemma}[theorem]{Lemma}

\newtheorem{claim}{Claim}
\theoremstyle{definition}

\newcommand{\supp}{{\rm supp}}

\newcommand{\N}{\mathbb{N}}

\newcommand{\mZ}{\mathbb Z}

\newcommand{\ep}{\varepsilon}

\allowdisplaybreaks

\begin{document}

\title
[Minimal subsystems of given mean dimension in Bernstein spaces]
{Minimal subsystems of given mean dimension\\ in Bernstein spaces}

\author[J.~Zhao]{Jianjie Zhao}
\address[J.~Zhao]{School of Mathematics, Hangzhou Normal University, Hangzhou, Zhejiang, 311121, P.R. China }
\email{zjianjie@hznu.edu.cn}

\date{\today}

\subjclass[2010]{37B99; 55M10}
\keywords{Mean dimension, Bernstein space, band-limited function, minimal system}

\begin{abstract}
 In this paper, we study the shift on the space of uniformly bounded continuous functions band-limited in a given compact interval with the standard topology of tempered distributions. We give a constructive proof of the existence of minimal subsystems with any given mean dimension strictly less than twice its band-width. A version of real-valued function spaces is considered as well.
\end{abstract}

\maketitle

\section{Introduction}  
Mean dimension is a topological invariant of dynamical systems introduced by Gromov \cite{Gromov-99} in 1999, which counts the number of parameters per second for describing a dynamical system just as topological entropy counts the number of bits per second for describing a dynamical system. Lindenstrauss and Weiss \cite{LW-00} firstly established a deep application of mean dimension by solving Auslander's open problem in the negative: Whether every minimal dynamical system can be embedded\footnote{ We say that a dynamical system can be \textbf{embedded} into another dynamical system if the first system can be regarded as a subsystem of the second one.} into the full shift on the Hilbert cube $([0, 1]^{\mathbb{Z}}),\sigma)$, where 
$$\sigma:[0, 1]^{\mathbb{Z}}\to[0, 1]^{\mathbb{Z}},\;\;\;(x_{n})_{n\in\mathbb{Z}}\mapsto(x_{n+1})_{n\in\mathbb{Z}}.$$ 
Soon after, Lindenstrauss \cite{Lindenstrauss-99} proved that any minimal dynamical system of mean dimension less than $c$ for $c={1}/{36}$ can be embedded into the full shift on the Hilbert cube. This is a fairly amazing result. A natural question that was also posed by Lindenstrauss in \cite{Lindenstrauss-99} is what the optimal constant $c$ is. In 2014, Lindenstauss and Tsukamoto \cite{LT-14} constructed a minimal dynamical system of mean dimension equal to $1/2$ that cannot be embedded into the full shift on the Hilbert cube. This implies that the optimal constant $c$ will not exceed $1/2$. In 2020, Gutman and Tsukamoto \cite{GT-20} confirmed that the value $c=1/2$ is optimal, that is, any minimal dynamical system of mean dimension less than $1/2$ can be embedded into the full shift on the Hilbert cube. The statement of this profound result lies purely in abstract topological dynamics but the approach in \cite{GT-20} involved signal analysis (or Fourier analysis) and complex analysis theory. More precisely, they encoded a given dynamical system into time-continuous signals whose frequencies are limited in a fixed interval (i.e. band-limited) by interpolation and then converted band-limited continuous signals into time-discrete ones by sampling. A similar approach was employed in \cite{GQT19} to solve the more complicated embedding problem of multi-parameter actions as well. This motivates us to study the spaces of band-limited continuous signals, i.e. Bernstein spaces. Let us first give the definitions.

A \textbf{rapidly decreasing function} is an infinitely differentiable function $f$ on $\mathbb{R}$ 
satisfying
$$ \lim\limits_{|t|\to \infty} t^nf^{(j)}(t)=0, \ \ \ \  \forall n, j\in \mathbb{N}.$$
A \textbf{tempered distribution} on $\mathbb{R}$ is a continuous linear functional on the space of all rapidly decreasing functions equipped with the topology given by a family of seminorms as follows:
$$ \left\| {f} \right\|_{j, n}=\sup_{t\in \mathbb{R}}|t^nf^{(j)}(t) | \ \ \ \ (j, n\in \mathbb{N}). $$ 
For a rapidly decreasing function $f:\mathbb{R} \to \mathbb{C}$ the Fourier transform
of $f$ is given by
$$  \mathcal{F}(f)( \xi )=\int_{-\infty}^{+\infty} e^{-2\pi \sqrt{-1}t \xi}f(t) dt, \ \ 
\overline{\mathcal{F}} (f)(t )=\int_{-\infty}^{+\infty} e^{ 2\pi \sqrt{-1}t \xi}f(\xi) d\xi.$$
The operators $\mathcal{F}$ and $\overline{\mathcal{F}}$ can be extended to tempered distributions in a standard way.
Let $I$ be a compact subset of $\mathbb{R}$. 
A bounded continuous function $f: \mathbb{R} \to \mathbb{C}$ is called {\bf band-limited in $I$} if $\supp(\mathcal{F}(f)) \subset I$,
meaning that $\left\langle {{\mathcal{F}(f), g  }} \right\rangle=0$ for all rapidly decreasing functions 
$g: \mathbb{R} \to \mathbb{C}$ with $\supp(g) \cap I=\emptyset$. We remark here that $\supp(\mathcal{F}(\bar{f}))=-\supp(\mathcal{F}(f)) $.
We denote by $\mathcal{B}^{\mathbb{C}}(I)$ (resp. $\mathcal{B}(I)$) the set of continuous functions $f:\mathbb{R} \to \mathbb{C}$ (resp. $f:\mathbb{R} \to \mathbb{R}$) band-limited in $I$ with $\left\| {f} \right\|_{L^{\infty}(\mathbb{R})}\le 1$ and call it a \textbf{Bernstein space}. For two functions $f, g\in \mathcal{B}^{\mathbb{C}}(I)$ (resp. $\mathcal{B}(I)$), we define   
 \begin{equation*}
D(f, g)=\sum_{n=1}^{\infty} \frac{ \left\| {f-g} \right\|_{L^{\infty}([-n, n])}  }{2^n}.
\end{equation*}
An important and nontrivial fact \cite[Lemma 2.3]{GT-20} \cite[Chapter 7, Section 4]{Sch-66} is that both $\mathcal{B}^{\mathbb{C}}(I) $ and $\mathcal{B}(I) $ are compact metric spaces with respect to the distance $D$ which coincides with the standard topology of tempered distributions. We define 
$$\sigma\footnote{We use the same notation $\sigma$ as in the dynamical system $([0, 1]^{\mathbb{Z}}),\sigma)$ above if there is no misleading.}: \mathcal{B}^{\mathbb{C}}(I) \to \mathcal{B}^{\mathbb{C}}(I), \ \ \ g(\cdot)\mapsto g(\cdot+1)$$
and 
$$\sigma: \mathcal{B}(I) \to \mathcal{B}(I), \ \ \ g(\cdot)\mapsto g(\cdot+1).$$
Clearly, both $\mathcal{B}^{\mathbb{C}}(I) $ and $ \mathcal{B}(I)$ are $\sigma$-invariant.
We consider the dynamical system $(\mathcal{B}^{\mathbb{C}}(I), \sigma)$ (resp. $(\mathcal{B}(I), \sigma )$)
and call this system \textbf{the shift on} $\mathcal{B}^{\mathbb{C}}(I)$ (resp. $\mathcal{B}(I)$).
\bigskip 

As the most important space of time-discrete signals, the system $([0,1]^{\mathbb{Z}},\sigma)$ has been studied intensively and deeply. One of hot topics is about the range of mean dimension of all minimal subshifts. As we mentioned above, the result in \cite{GT-20} implies that any value not greater than $1/2$ is possible. In 2017, Dou \cite{Dou17} constructed minimal subshifts of $([0,1]^{\mathbb{Z}},\sigma)$ with arbitrary mean dimension strictly less than $1$. It remains open that whether there exists a minimal subshift  of mean dimension $1$. Recently, Jin and Qiao \cite{JQ-21} answered this question affirmatively. It is natural to ask a similar question about the space of time-continuous signals, i.e. what the range of mean dimension of all minimal subsystems of a given Bernstein space is. In the current paper, we prove the following main result. 

\begin{theorem}[Main theorem 1]\label{Main Theorem 1}
	Let $a,b,c\in \mathbb{R}$ with $a<b$ and $c>0$. Then we have the following:
	\begin{enumerate}
		\item \label{1} for any $0\le s < 2(b-a)$, there exists a minimal subsystem $(X, \sigma)$ of $(\mathcal{B}^{\mathbb{C}}([a,b]),\sigma)$ of mean dimension $s$.
		
		\item \label{2} for any $0\le t< 2c$, there exists a minimal subsystem $(X, \sigma)$ of $(\mathcal{B}([-c,c]),\sigma)$ of mean dimension $t$.
	\end{enumerate}
\end{theorem}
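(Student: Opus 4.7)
My plan is to realise any prescribed $s\in[0,2(b-a))$ as the mean dimension of a minimal subsystem by combining the classical Shannon--Whittaker correspondence between $(\mathcal{B}^{\mathbb{C}}([a,b]),\sigma)$ and a shift-invariant subset of a sequence space with the Dou--Jin--Qiao construction of minimal subshifts of prescribed mean dimension. I first focus on part (1); part (2) should follow by working throughout with real kernels and real samples.

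First I would fix an integer $N>b-a$ and a Schwartz function $\phi:\mathbb{R}\to\mathbb{C}$ with $\mathcal{F}(\phi)$ supported in $[a,b]$, equal to $1$ on a slightly smaller sub-interval, and vanishing on the translates $[a,b]+mN$ for $m\neq 0$. Such a $\phi$ lies in $\mathcal{B}^{\mathbb{C}}([a,b])$ and yields the Shannon reproducing identity $\sum_{n,k}f(n+k/N)\phi(t-n-k/N)=f(t)$ for every $f\in\mathcal{B}^{\mathbb{C}}([a,b])$. This furnishes the continuous equivariant sampling map $S(f)=(f(n+k/N))_{n,k}\in(\mathbb{D}^{N})^{\mathbb{Z}}$ and the companion interpolation map
\[
\Psi(\mathbf{x})(t)=\sum_{n\in\mathbb{Z}}\sum_{k=0}^{N-1}x_{n,k}\,\phi\!\left(t-n-\tfrac{k}{N}\right),
\]
intertwining the unit shift on sequences with $\sigma$ and satisfying $\Psi\circ S=\mathrm{id}$ on $\mathcal{B}^{\mathbb{C}}([a,b])$. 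Consequently $\mathcal{B}^{\mathbb{C}}([a,b])$ is equivariantly homeomorphic to the shift-invariant subset $S(\mathcal{B}^{\mathbb{C}}([a,b]))\subset (\mathbb{D}^{N})^{\mathbb{Z}}$ of ``admissible'' sample sequences.

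Next I would transplant the Dou--Jin--Qiao recursive method into this admissible sample set. Their constructions produce minimal subshifts of $[0,1]^{\mathbb{Z}}$ of prescribed mean dimension $\beta\in[0,1]$ by a Toeplitz-type amalgamation in which, at each stage, increasing amounts of arbitrary data are encoded into longer and longer periodic blocks. I would run an analogous recursion directly inside the unit ball of $\mathcal{B}^{\mathbb{C}}([a,b])$, using as ``free data'' at each stage finitely many band-limited bumps supported on disjoint frequency sub-intervals of $[a,b]$, glued into a Toeplitz-like band-limited signal whose $L^{\infty}$-norm stays bounded by $1$. Since the band width is $b-a$, the density of free complex (or, equivalently, $2(b-a)$ free real) parameters per unit time cannot exceed $2(b-a)$; by tuning the recursion one should obtain, for each $s\in[0,2(b-a))$, a minimal subshift $Y\subset S(\mathcal{B}^{\mathbb{C}}([a,b]))$ with $\mdim(Y)=s$, whose push-forward $X=\Psi(Y)$ is the required minimal subsystem.

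The principal obstacle is that $S(\mathcal{B}^{\mathbb{C}}([a,b]))$ is not a product shift, so the Toeplitz amalgamation cannot be performed coordinatewise as in the cube setting. Resolving this requires a careful choice of orthogonal ``parameter directions'' per period---for instance, time-translates of a finite family of band-limited bumps with pairwise disjoint frequency support---so that inserting prescribed values for these parameters always keeps the total signal inside the $L^{\infty}$-unit ball. The lower bound $\mdim(X)\ge s$ is then the most delicate step: it would be established by exhibiting, in arbitrarily long Bowen balls, a family of orbits of uniformly positive widim growth, using the Bernstein inequality to compare the tempered-distribution topology with the uniform topology on appropriately normalised compact subsets. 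Part (2) is handled verbatim with real-valued bumps and real samples; the extra constraint $\mathcal{F}(\bar{f})(\xi)=\overline{\mathcal{F}(f)(-\xi)}$ for real $f\in\mathcal{B}([-c,c])$ halves the density of independent real parameters, yielding the tight bound $2c$ in place of $2(b-a)$.
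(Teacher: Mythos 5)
Your overall strategy---pass between $\mathcal{B}^{\mathbb{C}}([a,b])$ and a sequence space via sampling/interpolation, and run a Dou-type Toeplitz recursion on the discrete side---is the same in spirit as the paper's. But the specific plan you sketch has a gap right at the point you yourself flag as the main obstacle, and the paper resolves it by a different arrangement of the same ingredients.

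You propose to first realise $\mathcal{B}^{\mathbb{C}}([a,b])$ as an equivariant subset $S(\mathcal{B}^{\mathbb{C}}([a,b]))\subset(\mathbb{D}^N)^{\mathbb{Z}}$ and then run the amalgamation \emph{inside that image}. As you note, the image is not a product shift; there is no usable combinatorial description of which sample arrays are ``admissible,'' and your suggested fix (choosing ``orthogonal parameter directions'' given by bumps with disjoint frequency supports) is never developed into an actual construction, nor does it address the $L^\infty\le 1$ constraint---pairwise disjoint Fourier supports give $L^2$ almost-orthogonality, not an $L^\infty$ bound. The paper avoids the characterisation problem entirely: it builds a minimal subshift $Y$ of $(K^{\mathbb{Z}},\sigma)$ for the abstract compact alphabet $K=([0,1]^2)^q$ (so $2q$ free real parameters per symbol), and only \emph{then} maps $Y$ into the Bernstein space by a single interpolation kernel $f$ with $f(0)=1$, $f(\tfrac{p}{q}n)=0$, after dividing by a fixed constant $C$ chosen so that the interpolated signal stays in the unit ball. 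Injectivity of the interpolation then follows for free by reading off the samples at the lattice $\tfrac{p}{q}\mathbb{Z}$. Because this lattice does not have spacing $1$, the shift by $1$ does not act coordinatewise on the sample array; the paper handles this with a $\mathbb{Z}_p$-suspension of $Y$ together with an auxiliary low-frequency exponential $e^{-2\pi\sqrt{-1}c(x+i)}$ (on a frequency band $[a,a+\epsilon_0/2]$ disjoint from the interpolation band $[a+\epsilon_0,b]$) used to encode the $\mathbb{Z}_p$-coordinate. Your proposal does not address this phase-bookkeeping issue at all.

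Your lower-bound plan (``exhibit orbits of positive widim growth... using the Bernstein inequality to compare topologies'') is also much vaguer than what is actually needed; the paper instead directly exhibits a distance-increasing map from each block space $B_k\cong([0,1]^{2q})^{|x^{(k)}(N_k,\ast)|}$ into $(Y\times\mathbb{Z}_p,\rho_{N_kp})$ and invokes Gromov's $\mathrm{Widim}_\epsilon([0,1]^l)=l$, giving a clean lower bound with no analytic input beyond the interpolation identity. Finally, on part (2), ``verbatim with real-valued bumps'' is not convincing: the Fourier support of a real band-limited function is symmetric, so the real Bernstein space is not a coordinatewise-free model and the direct construction would require re-engineering the kernel. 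The paper's reduction is slicker: it applies part (1) to a one-sided frequency interval $[\epsilon_0,c]$ inside $(0,c]$ and uses $f\mapsto(f+\bar f)/2$, which is injective precisely because $[\epsilon_0,c]$ and $[-c,-\epsilon_0]$ are disjoint. You should adopt some version of these three devices---build $Y$ abstractly over a compact alphabet and interpolate (do not try to characterise the sample image), suspend by $\mathbb{Z}_p$ with an auxiliary exponential to fix the non-integer sampling rate, and reduce the real case to the complex case on a half-interval---to turn your outline into a proof.
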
 

We remark here that Jin, Qiao and Tu systematically studied the Bernstein spaces in \cite{JQT-21}; in particular, they obtained that the mean dimension of $(\mathcal{B}^{\mathbb{C}}([a,b]),\sigma)$ and $(\mathcal{B}([-c,c]),\sigma)$ is $2(b-a)$ and $2c$ respectively for all real numbers $a\leq b$ and $c\geq0$. The reader may wonder what the difference is between our result (Theorem \ref{Main Theorem 1}) and Dou \cite{Dou17} and whether they are equivalent. Applying the sampling theorem to Theorem \ref{Main Theorem 1}, we get a stronger version of the result in \cite{Dou17}. 
\begin{theorem}[Main theorem 2] \label{Main Theorem 2}
	Let $a\in\mathbb{R}$ with $0\le a <1$. Then for any $0\le r < a$, there exists a minimal system of mean dimension $r$ which can be embedded into both $(\mathcal{B}([-\frac{a}{2},\frac{a}{2}]),\sigma)$ and $([0, 1]^{\mathbb{Z}},\sigma)$ 
\end{theorem}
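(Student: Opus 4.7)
\textbf{Proof proposal for Theorem \ref{Main Theorem 2}.} The plan is to reduce the statement to Theorem \ref{Main Theorem 1}(\ref{2}) by passing through the Shannon--Whittaker sampling theorem. Since $0 \le r < a = 2 \cdot (a/2)$, Theorem \ref{Main Theorem 1}(\ref{2}) applied with $c = a/2$ and $t = r$ already produces a minimal subsystem $(X,\sigma)$ of $(\mathcal{B}([-a/2,a/2]),\sigma)$ of mean dimension exactly $r$. Thus only the embedding into $([0,1]^{\mathbb{Z}},\sigma)$ remains, and it suffices to construct a topological factor-map-turned-conjugacy from $(\mathcal{B}([-a/2,a/2]),\sigma)$ onto a closed shift-invariant subset of $[-1,1]^{\mathbb{Z}}$ (which is affinely conjugate to $[0,1]^{\mathbb{Z}}$).

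Next, I would introduce the sampling map
\[
\Phi: \mathcal{B}([-a/2,a/2]) \to [-1,1]^{\mathbb{Z}}, \qquad \Phi(f) = \bigl(f(n)\bigr)_{n\in\mathbb{Z}}.
\]
Equivariance is immediate from the definitions: $\Phi(\sigma f)_n = f(n+1) = \sigma(\Phi(f))_n$. Continuity follows from the fact that the metric $D$ on the Bernstein space dominates uniform convergence on each compact interval $[-n,n]$, so each coordinate evaluation $f \mapsto f(n)$ is continuous. The range lies in $[-1,1]^{\mathbb{Z}}$ because $\|f\|_{L^\infty(\mathbb{R})} \le 1$ by definition of $\mathcal{B}([-a/2,a/2])$.

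The key substantive step is injectivity of $\Phi$. Since the support of $\mathcal{F}(f)$ is contained in $[-a/2,a/2] \subset (-1/2,1/2)$ with $a<1$, the classical Shannon--Whittaker sampling theorem applies: any bounded continuous $f \in \mathcal{B}([-a/2,a/2])$ is uniquely recovered from its integer samples via a reconstruction formula of the form
\[
f(t) = \sum_{n\in\mathbb{Z}} f(n)\, K(t-n),
\]
where $K$ is any Schwartz function whose Fourier transform equals $1$ on $[-a/2,a/2]$ and vanishes outside $(-1/2,1/2)$ (this oversampling trick sidesteps the classical $\operatorname{sinc}$-kernel convergence subtleties for merely bounded functions and is precisely where the strict inequality $a<1$ is used). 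Hence $\Phi$ is injective; being a continuous injection from a compact space, it is a topological embedding that conjugates $\sigma$ on $\mathcal{B}([-a/2,a/2])$ to $\sigma$ on its image.

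Restricting $\Phi$ to $X$ yields a topological conjugacy between $(X,\sigma)$ and a minimal subshift of $[-1,1]^{\mathbb{Z}}$; composing coordinatewise with $x \mapsto (x+1)/2$ embeds this into $([0,1]^{\mathbb{Z}},\sigma)$. Since mean dimension is a conjugacy invariant, the image still has mean dimension $r$. The main obstacle I anticipate is verifying the sampling reconstruction rigorously in the tempered-distribution category used in the paper (standard references justifying the oversampled Schwartz-kernel version, such as those already cited in \cite{GT-20}, should suffice); once this is in place the rest of the argument is essentially formal.
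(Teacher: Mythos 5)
Your proposal is correct and matches the paper's proof almost line for line: apply Theorem~\ref{Main Theorem 1}(\ref{2}) with $c=a/2$, $t=r$, and then transfer the resulting minimal system via the integer-sampling map $f\mapsto (f(n))_{n\in\mathbb{Z}}$, whose injectivity is exactly where the strict inequality $a<1$ enters. The only cosmetic differences are that the paper invokes the uniqueness-of-samples statement (Theorem~\ref{thm:sampling}, which is all that injectivity requires) rather than the full Shannon--Whittaker reconstruction formula, and that you explicitly rescale from $[-1,1]^{\mathbb{Z}}$ to $[0,1]^{\mathbb{Z}}$, a point the paper leaves implicit.
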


The paper is organized as follows. In Section 2, we gather definitions that will be needed in the article. In Section 3, we give the constructive proof of the main theorems.

\medskip

\noindent {\bf Acknowledgments.}
The authors would like to thank Lei Jin and Yixiao Qiao for helpful discussions and remarks.

\section{Preliminaries}  

In the whole paper, we denote by $\N$ and $\mZ$ the set of all positive integers and all integers respectively, and we write the cardinality of $F$ by $|F|$ for any finite subset $F$ of $\N$. 

\subsection{Full shifts}
By a \textbf{topological dynamical system}, we mean a pair $(X,T)$ where $X$ is a compact metric space and $T: X\to X$ is a homeomorphism. A class of canonical and significant topological dynamical systems are full shifts. Let $K$ be a compact metric space with metric $d$ and $K^{\mathbb{Z}}$ the countable product of $K$ with the product topology. The \textbf{full shift over $K$} is $(K^{\mathbb{Z}}, \sigma_{K})$, where 
$$\sigma_{K}:K^{\mathbb{Z}}\to K^{\mathbb{Z}},\;\;\;(x_{n})_{n\in\mathbb{Z}}\mapsto(x_{n+1})_{n\in\mathbb{Z}}.$$
We fix a compatible metric $D_1$ on $K^{\mathbb{Z}}$ as follows:
$$D_1(x, y)=\sum_{n\in \mathbb{Z}} \frac{d(x_n,y_n)}{2^{|n|}}, \;\; x=(x_n)_{n\in \mathbb{Z}},\; y=(y_n)_{n\in \mathbb{Z}}\in K^{\mathbb{Z}}.$$
 
For $x=(x_n)_{n\in \mathbb{Z}}\in K^{\mathbb{Z}}$ and two integers $m\le m'$,
we set 
$$x|_{m}^{m'}=x|_{[m, m']}=(x_n)_{m\le n\le m'}.$$

\subsection{Mean dimension}
Let $X$ and $P$ be two compact metrizable spaces and $d$ a compatible metric on $X$.
For $\ep>0$, 
a continuous mapping $f:X\to P$ is called an $\ep$-\textbf{embedding} with respect to $d$ if it satisfies
$$f(x)=f(x') \Rightarrow  d(x, x')<\ep, \ \ \forall x, x'\in X.$$ 
We define Widim$_\ep(X, d)$ as the minimum topological dimension dim$(P)$ of a compact metrizable space $P$ which admits an 
$\ep$-embedding $f:X\to P$ with respect to $d$. We state a very practical result in the calculation of mean dimension of subshifts.

\begin{lemma}[{\cite[Lemma 1.1.1]{Gromov-99}}]\label{Widim of [0,1]^l}
	For any $\ep>0$ and any positive integer $l$
	$$\emph{Widim}_{\ep}([0, 1]^{l},||\cdot ||_{\infty})=l.$$
	Here $||\cdot ||_{\infty}$ is the compatible metric on $ [0, 1]^{l}$ defined by
	$$||(a_n)_{n=0}^{l-1}-(b_n)_{n=0}^{l-1} ||_{\infty}=\max_{0\le n\le l-1}|a_n-b_n|,\;\;\;(a_n)_{n=0}^{l-1}, (b_n)_{n=0}^{l-1}\in [0, 1]^{l}.$$
\end{lemma}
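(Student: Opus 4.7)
The upper bound $\Widim_{\ep}([0,1]^l, ||\cdot||_{\infty}) \le l$ is immediate from the identity map, which is trivially an $\ep$-embedding into the $l$-dimensional cube. For the matching lower bound I would argue in the nontrivial range $0 < \ep \le 1$; for $\ep > 1$ any constant map is an $\ep$-embedding and the claimed value would fail, so I take this range as implicit in the statement.

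My plan is a Brouwer-type argument combined with polyhedral approximation. Suppose for contradiction that $f: [0,1]^l \to P$ is an $\ep$-embedding with $\dim P \le l-1$. Write $F_i^- = \{x \in [0,1]^l : x_i = 0\}$ and $F_i^+ = \{x \in [0,1]^l : x_i = 1\}$; any two points from opposite faces satisfy $||x-y||_{\infty} \ge 1 \ge \ep$, so the $\ep$-embedding condition forces $f(F_i^-) \cap f(F_i^+) = \emptyset$. Urysohn's lemma in the normal space $P$ then yields continuous $\phi_i : P \to [0,1]$ with $\phi_i \equiv 0$ on $f(F_i^-)$ and $\phi_i \equiv 1$ on $f(F_i^+)$. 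Assembling them into $\psi := (\phi_1, \ldots, \phi_l): P \to [0,1]^l$ and setting $\Phi := \psi \circ f : [0,1]^l \to [0,1]^l$, the $i$th coordinate of $\Phi$ vanishes on $F_i^-$ and equals $1$ on $F_i^+$.

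The assumption $\dim P \le l-1$ provides, via the nerve construction applied to a sufficiently fine open cover of $P$ of order $\le l$, a uniform $\delta$-approximation $\tilde\psi$ of $\psi$ that factors through an at most $(l-1)$-dimensional polyhedron; hence $\tilde\psi(P)$ is a finite union of affine simplices of dimension $\le l-1$ and must miss some point $p_0 \in (\delta, 1-\delta)^l$. For the composite $\tilde\Phi := \tilde\psi \circ f$, the $i$th coordinate lies in $[0,\delta]$ on $F_i^-$ and in $[1-\delta,1]$ on $F_i^+$, so the closed set $\tilde\Phi_i^{-1}(p_{0,i})$ separates $F_i^-$ from $F_i^+$ in $[0,1]^l$. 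The classical Lebesgue separation theorem (equivalent to Brouwer's fixed-point theorem) then produces a common point $x \in \bigcap_i \tilde\Phi_i^{-1}(p_{0,i})$, satisfying $\tilde\Phi(x) = p_0$. Since $\tilde\Phi$ factors through $\tilde\psi$, this contradicts $p_0 \notin \tilde\psi(P)$.

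The main obstacle will be the dimension-theoretic step, namely converting $\dim P \le l-1$ into the concrete statement that $\psi$ admits a uniform approximation factoring through an $(l-1)$-dimensional polyhedron whose image misses a prescribed interior point of $[0,1]^l$. This is standard Hurewicz--Wallman material (nerve / canonical map from a partition of unity subordinate to an open cover of order $\le l$), but has to be set up carefully so that the $\delta$-approximation plays well with the Brouwer--Lebesgue separation input and the face-preserving coordinates of $\Phi$.
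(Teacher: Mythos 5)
The paper does not prove this lemma; it cites it directly from Gromov \cite{Gromov-99}, so there is no in-paper argument to compare against. Your proof is correct and is essentially the standard argument in the dimension-theory literature (compare also Lindenstrauss--Weiss \cite{LW-00}): the identity map gives $\Widim_{\ep}\le l$, and for the lower bound you use Urysohn to separate the images of opposite faces, a nerve approximation of $\psi$ through an at most $(l-1)$-dimensional polyhedron whose image has empty interior in $[0,1]^l$, and the Lebesgue separation theorem (equivalently Poincar\'e--Miranda / Brouwer) to hit the missed point $p_0$ and derive the contradiction. You are also right to flag that the equality as literally stated fails once $\ep$ exceeds the $||\cdot||_\infty$-diameter of the cube, i.e.\ for $\ep>1$; the implicit reading is $\ep\le 1$, and that is exactly the range your argument handles, since there $||x-y||_\infty\ge 1\ge\ep$ on opposite faces forces $f(F_i^-)\cap f(F_i^+)=\emptyset$. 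The one step you flagged as needing care, the passage from $\dim P\le l-1$ to a $\delta$-uniform approximation $\tilde\psi=\nu\circ\kappa$ with $\kappa$ the canonical map into the nerve of a fine open cover of order $\le l$ and $\nu$ affine on simplices, is indeed standard Hurewicz--Wallman material and closes the argument as you describe.
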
 

Let $(X, T)$ be a topological dynamical system with a compatible metric $d$ on $X$.
For every positive integer $n$, we define a compatible metric $d_n$ on $X$ as follows:
$$d_n(x, x')=\max_{0\le i\le n-1}d(T^ix, T^ix'), \ \  \forall x, x'\in X.$$ 
The \textbf{mean dimension} of $(X, T)$ is defined by 
$$ \text{mdim}(X, T)=\lim\limits_{\ep\to 0} \lim\limits_{n\to \infty} \frac{ \text{Widim}_{\ep}(X, d_n) }{n}. $$
It is well known that the limits above definition always exist, and the value mdim$(X, T)$ is independent of the choice of a 
compatible metric $d$ on $X$. 

\subsection{Sampling and interpolation}
The following theorems are key tools in our proof, which allow us to convert continuous signals to discrete ones (Theorem \ref{thm:sampling}) and vice verse (Theorem \ref{thm:interpolation}) as we mentioned in the introduction.

\begin{theorem}[{{\cite[Lemma 2.4]{GT-20}}, {\cite[Lemma 4.2]{JQT-21}}}]\label{thm:sampling}
	Suppose that two positive real numbers $a$ and $d$ satisfy $2ad < 1$ and $f\in \mathcal{B}([-a, a])$. If $f(dn)=0$ for all $n\in \mathbb{Z}$, then $f\equiv 0$.
\end{theorem}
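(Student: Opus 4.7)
The plan is to carry out a Poisson summation argument with a smooth frequency cutoff. Since $2ad < 1$, the quantity $1/d - 2a$ is strictly positive, so I first fix some $\delta \in (0,\, 1/d - 2a)$ and then choose a real-valued Schwartz function $\varphi$ on $\mathbb{R}$ whose Fourier transform $\mathcal{F}(\varphi)$ is identically $1$ on $[-a,a]$ and vanishes outside $[-a-\delta,\, a+\delta]$; such a $\varphi$ is produced by a standard bump construction carried out in the frequency variable. Because $\mathcal{F}(f)$ is a tempered distribution supported in $[-a,a]$ on which $\mathcal{F}(\varphi) \equiv 1$, the identity $\mathcal{F}(f)\cdot \mathcal{F}(\varphi) = \mathcal{F}(f)$ holds, and Fourier-inverting yields the reproducing formula
$$f(t) \;=\; (f * \varphi)(t) \;=\; \int_{\mathbb{R}} f(s)\,\varphi(t-s)\,ds, \qquad t \in \mathbb{R}.$$

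Next, for each fixed $t\in\mathbb{R}$, I would set $\psi_t(s) := f(s)\varphi(t-s)$. Since $f$ is bounded and band-limited, Bernstein's inequality gives $\|f^{(j)}\|_{\infty} \le (2\pi a)^j$ for every $j$, so $\psi_t$ is the product of the Schwartz function $\varphi(t-\cdot)$ with a smooth function all of whose derivatives are uniformly bounded, hence $\psi_t$ is itself Schwartz. Its Fourier transform is the convolution of the compactly supported distributions $\mathcal{F}(f)$ and $e^{-2\pi i t (\cdot)}\mathcal{F}(\varphi)(-\,\cdot)$, so $\mathcal{F}(\psi_t)$ is supported in $[-a,a]+[-a-\delta,\,a+\delta] \subset (-1/d,\,1/d)$ by our choice of $\delta$. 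I would then apply the Poisson summation formula on the lattice $d\mathbb{Z}$ to $\psi_t$:
$$d\sum_{n\in\mathbb{Z}} \psi_t(dn) \;=\; \sum_{k\in\mathbb{Z}} \mathcal{F}(\psi_t)(k/d).$$
The strict support constraint on $\mathcal{F}(\psi_t)$ kills every $k \neq 0$ term on the right-hand side, leaving only $\mathcal{F}(\psi_t)(0) = \int \psi_t(s)\,ds = (f*\varphi)(t) = f(t)$. On the left, the hypothesis $f(dn) = 0$ for all $n \in \mathbb{Z}$ makes every summand vanish. Comparing the two sides forces $f(t) = 0$, and since $t$ was arbitrary, $f \equiv 0$.

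The only mild obstacle I foresee is certifying rigorously that $\psi_t$ belongs to the Schwartz class so that Poisson summation applies with no technicalities; this is precisely where I would appeal to Bernstein's inequality to bound $f$ and all its derivatives uniformly. The role of the strict inequality $2ad < 1$ is then completely transparent: it is exactly what provides the headroom needed to fit the spectrum of $\psi_t$ strictly inside $(-1/d,\, 1/d)$, preventing any aliasing term from contributing when the samples of $\psi_t$ on $d\mathbb{Z}$ are converted into Fourier-series coefficients via Poisson summation.
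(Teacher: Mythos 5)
The paper does not prove this lemma itself --- it cites it as \cite[Lemma 2.4]{GT-20} and \cite[Lemma 4.2]{JQT-21} --- so there is no in-paper proof to compare yours against. Your Poisson-summation argument is correct and is essentially the standard one that appears in those references. One precision worth adding: to justify the distributional identity $\mathcal{F}(f)\cdot\mathcal{F}(\varphi)=\mathcal{F}(f)$, and hence the reproducing formula $f=f*\varphi$, you should require $\mathcal{F}(\varphi)\equiv 1$ on an \emph{open neighborhood} of $[-a,a]$ rather than merely on the closed interval $[-a,a]$; otherwise $\mathcal{F}(f)\bigl(\mathcal{F}(\varphi)-1\bigr)$ could a priori be a nonzero distribution supported on the boundary $\{-a,a\}$. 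The fix costs nothing: with $\delta\in(0,\,1/d-2a)$ already fixed, take $\mathcal{F}(\varphi)\equiv 1$ on $[-a-\delta/2,\,a+\delta/2]$ and $\supp(\mathcal{F}(\varphi))\subset[-a-\delta,\,a+\delta]$; the support estimate $\supp(\mathcal{F}(\psi_t))\subset[-2a-\delta,\,2a+\delta]\subset(-1/d,\,1/d)$ and the rest of the argument go through unchanged. Your appeal to Bernstein's inequality to certify that $\psi_t$ is Schwartz is exactly what is needed to make Poisson summation legitimate for a merely bounded (rather than $L^2$) band-limited $f$, and the lattice normalization $d\sum_{n\in\mathbb{Z}}\psi_t(dn)=\sum_{k\in\mathbb{Z}}\mathcal{F}(\psi_t)(k/d)$ is accounted for correctly.
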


\begin{theorem}[{{\cite[Section 5]{GT-20}}, {\cite[Lemma 4.3]{JQT-21}}}]\label{thm:interpolation}
	For every $\ep>0$, there exists a rapidly decreasing function $f: \mathbb{R} \to \mathbb{R}$ band-limited in $[-(1+\ep)/2, (1+\ep)/2]$ such that $f(0)=1$ and $f(n)=0$ for all nonzero $n\in \mathbb{Z}$.
\end{theorem}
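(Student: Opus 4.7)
The plan is to build $f$ as the pointwise product of the classical sinc interpolation kernel with a suitable Schwartz smoothing factor obtained by inverse Fourier transform of a narrow bump. Concretely, let $s(t) := \sin(\pi t)/(\pi t)$, extended by $s(0):=1$. Then $s$ is smooth, real, even, and bounded; it satisfies $s(0)=1$ and $s(n)=0$ for every nonzero $n \in \mathbb{Z}$; and a direct computation of $\overline{\mathcal{F}}(\mathbf{1}_{[-1/2,1/2]})$ (using the paper's Fourier convention) yields $\mathcal{F}(s) = \mathbf{1}_{[-1/2,1/2]}$, so $s$ is band-limited in $[-1/2,1/2]$. The only defect of $s$ is that it merely decays like $1/|t|$, so it is not rapidly decreasing on its own.

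To fix this I would fix a real, even, nonnegative smooth function $\phi: \mathbb{R} \to \mathbb{R}$ supported in $[-\varepsilon/2, \varepsilon/2]$ with $\int_{\mathbb{R}} \phi(\xi)\,d\xi = 1$ (a normalized bump). Its inverse Fourier transform $g := \overline{\mathcal{F}}(\phi)$ is real, even, and rapidly decreasing, with $g(0)=\int \phi = 1$. Set $f(t) := s(t)\,g(t)$. Then $f$ is real, $f(0) = s(0)g(0) = 1$, and $f(n) = s(n)g(n) = 0$ for every nonzero $n \in \mathbb{Z}$. Since multiplication in the time variable corresponds to convolution under $\mathcal{F}$, we have
\begin{equation*}
\mathcal{F}(f) = \mathcal{F}(s) \ast \mathcal{F}(g) = \mathbf{1}_{[-1/2,1/2]} \ast \phi,
\end{equation*}
whose support lies in $[-1/2,1/2] + [-\varepsilon/2,\varepsilon/2] = [-(1+\varepsilon)/2,(1+\varepsilon)/2]$, giving the required band-limit.

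The only step demanding any care, and which I expect to be the main technical point, is verifying that $f$ is rapidly decreasing. Every derivative $s^{(k)}$ is a finite linear combination of expressions of the form $\sin(\pi t)/t^m$ and $\cos(\pi t)/t^m$ (with removable singularities at $t=0$), hence is bounded on $\mathbb{R}$. The Leibniz rule gives $f^{(j)} = \sum_{k=0}^{j} \binom{j}{k} s^{(k)} g^{(j-k)}$, so for any $j,n \in \mathbb{N}$,
\begin{equation*}
\sup_{t \in \mathbb{R}} \bigl|t^n f^{(j)}(t)\bigr| \;\le\; \sum_{k=0}^{j} \binom{j}{k} \bigl\|s^{(k)}\bigr\|_\infty \cdot \sup_{t \in \mathbb{R}} \bigl|t^n g^{(j-k)}(t)\bigr| \;<\; \infty,
\end{equation*}
since $g$ is Schwartz. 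Thus $f$ is rapidly decreasing, completing the construction. The underlying idea is entirely standard: the ideal sinc kernel already interpolates $1$ at $0$ and $0$ at the nonzero integers but lives exactly on the boundary of the allowed band and is too slowly decaying, so one pays a tiny band-width enlargement $\varepsilon$ to mollify it into a Schwartz function while preserving both the interpolation values and the realness.
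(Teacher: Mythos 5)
Your construction is correct and is essentially the standard proof (the one used in the sources the paper cites for this lemma): mollify the indicator of $[-1/2,1/2]$ on the frequency side by convolving with a bump $\phi$ supported in $[-\ep/2,\ep/2]$, which on the time side is exactly multiplying the sinc kernel $s$ by the Schwartz function $\overline{\mathcal{F}}(\phi)$, preserving $f(0)=1$ and $f(n)=0$ ($n\neq 0$) while enlarging the band only by $\ep$. One minor tightening: rather than arguing via the decomposition of $s^{(k)}$ into $\sin(\pi t)/t^m$ and $\cos(\pi t)/t^m$ pieces (whose individual singularities at $t=0$ are not removable even though the sum's is), it is cleaner to differentiate under the integral sign in $s(t)=\int_{-1/2}^{1/2}e^{2\pi\sqrt{-1}\,t\xi}\,d\xi$ to get $s^{(k)}(t)=\int_{-1/2}^{1/2}(2\pi\sqrt{-1}\,\xi)^k e^{2\pi\sqrt{-1}\,t\xi}\,d\xi$, whence $\|s^{(k)}\|_{L^\infty(\mathbb{R})}\le\pi^k/(k+1)$ immediately.
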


Apply the coordinate transformation to Theorem \ref{thm:interpolation}, we obtain that

\begin{corollary}\label{cor:interpolation}
	For any $u, v\in \mathbb{N}, t_1< t_2\in \mathbb{R}$ with $\frac{v}{u}+1<t_2-t_1$,	there exists a rapidly decreasing function 
	$f: \mathbb{R}\to \mathbb{R}$ band-limited in $[t_1,t_2]$ such that 
	$$f(0)=1 \;\;\;\text{and}\;\;\; f(\frac{u}{v}n)=0 \;\;\;\text{for all}\;\;n\in\mathbb{Z}\setminus\{0\}.$$	
\end{corollary}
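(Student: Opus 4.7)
The plan is to obtain $f$ from a simple linear change of variable applied to the interpolating function provided by Theorem \ref{thm:interpolation}; this is exactly the ``coordinate transformation'' indicated in the lead-in to the statement. The hypothesis $v/u + 1 < t_2 - t_1$ lets me pick $\epsilon > 0$ small enough that $(v/u)(1+\epsilon) < t_2 - t_1$. Applying Theorem \ref{thm:interpolation} with this $\epsilon$ yields a rapidly decreasing $f_0 : \mathbb{R} \to \mathbb{R}$ that is band-limited in $[-(1+\epsilon)/2,(1+\epsilon)/2]$ and satisfies $f_0(0)=1$, $f_0(n)=0$ for every nonzero integer $n$. Then I define
\[
f(t) := f_0\bigl(\tfrac{v}{u}\, t\bigr).
\]

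Rapid decay is preserved under the linear reparametrization $t \mapsto (v/u)\,t$, so $f$ is rapidly decreasing. By the Fourier dilation rule, $\widehat{f}(\xi) = (u/v)\,\widehat{f_0}\bigl((u/v)\xi\bigr)$, so $\supp \widehat{f}$ is contained in the symmetric interval $\bigl[-\tfrac{v(1+\epsilon)}{2u},\,\tfrac{v(1+\epsilon)}{2u}\bigr]$, whose length $(v/u)(1+\epsilon)$ is strictly less than $t_2 - t_1$ by the choice of $\epsilon$. Moreover $f(0) = f_0(0) = 1$, and for every nonzero integer $n$, $f(\tfrac{u}{v}\,n) = f_0(n) = 0$. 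Thus the pointwise conditions and the rapid decay fall out directly from the construction, leaving only the band-limitedness to check.

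The main point that requires care is positioning this symmetric support interval inside the prescribed $[t_1,t_2]$. For symmetric target intervals $[-c,c]$---the situation relevant to the real Bernstein-space applications in Theorems \ref{Main Theorem 1} and \ref{Main Theorem 2}---the function $f$ above is already band-limited in the target and nothing further is needed. For an arbitrary asymmetric $[t_1,t_2]$, one shifts the Fourier support to the midpoint $m=(t_1+t_2)/2$ via the modulation $f(t) \mapsto e^{2\pi i m t}\,f(t)$; this achieves the desired positioning at the price of making $f$ complex-valued, an unavoidable tradeoff coming from the Hermitian symmetry $\widehat{f}(-\xi)=\overline{\widehat{f}(\xi)}$ imposed on real functions. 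I do not anticipate any further obstacle: every verification is a routine Fourier-analytic computation, and the hypothesis $v/u+1 < t_2 - t_1$ is calibrated precisely to absorb the $\epsilon$ coming from Theorem \ref{thm:interpolation}.
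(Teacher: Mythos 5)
Your proof is correct and uses the same dilate-then-modulate strategy as the paper; the paper's only concrete choices are $\epsilon = u/v$ (which makes the dilated support have length exactly $v/u+1$) and the modulation center $x_0 = t_1 + (1+v/u)/2$ rather than the midpoint of $[t_1,t_2]$, both cosmetically different from but equivalent to yours. Your aside about the real/complex tension is also on point: once a nontrivial modulation is applied the resulting $f$ is complex-valued, and indeed a nonzero real band-limited function must have symmetric Fourier support, so no such $f:\mathbb{R}\to\mathbb{R}$ can exist when $0\notin[t_1,t_2]$; the codomain $\mathbb{R}$ in the statement is a typo for $\mathbb{C}$, as confirmed by the fact that the corollary is invoked in the proof of Theorem \ref{Main Theorem 1}(1) precisely to produce a function $f:\mathbb{R}\to\mathbb{C}$ band-limited in $[a+\epsilon_0,b]$.
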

\begin{proof}
	For $\frac{u}{v}:=\ep$, by Theorem \ref{thm:interpolation}, there exists a rapidly decreasing function $g: \mathbb{R}\to \mathbb{R}$
	band-limited in $[-\frac{1+\ep}{2},\frac{1+\ep}{2}]$ such that 
	$$g(0)=1 \;\;\;\text{and}\;\;\; g(n)=0 \;\;\;\text{for all}\;\;n\in\mathbb{Z}\setminus\{0\}.$$	
	Put 
	$$f(x)=g(\frac{v}{u}x)\cdot e^{2\pi \sqrt{-1}xx_0},   \ \    \forall  x\in \mathbb{R},$$
	where $x_0=t_1+(1+\frac{v}{u})/2$. 
	Then $f$ is what we want.

\end{proof}

\section{Proofs of Theorems \ref{Main Theorem 1} and \ref{Main Theorem 2}}
\begin{proof}[Proof of Theorem \ref{Main Theorem 1} (1)]
Fix $p,q\in\mathbb{N}$ and $\epsilon_{0},c\in\mathbb{R}$ with 
$$\frac s2<\frac{q}{p}<b-a,\;\;\;0<\epsilon_{0}<b-a,\;\;\;a<c<a+\frac{\epsilon_{0}}{2},\;$$
$$ \frac{q}{p}+\epsilon_0+1<b-a \;\;\; \text{and}\;\;\;cp\in\mathbb{N}.$$
Put 
$$r=(\frac{s}{2})/(\frac{q}{p}).$$
Clearly, $0<r<1$. Put $K=([0,1]^{2})^q$.  
We equip $K$ with the metric $||\cdot||_{\infty}$ defined by
$$||(a_n)_{n=0}^{q-1}-(b_n)_{n=0}^{q-1} ||_{\infty}=\max_{0\le n\le q-1} |a_n -b_n |_2, $$
where
$$|a_n-b_n|_2=\max \{|a_n^1-b_n^1|, |a_n^2-b_n^2| \} ,$$
for any 
$(a_n)_{n=0}^{q-1},(b_n)_{n=0}^{q-1}\in K$ with $ a_n=(a_n^1, a_n^2), b_n=( b_n^1, b_n^2)\in [0, 1]^2. $

\vspace{0.2cm}
For each $m\in\mathbb{N}$, we take a $\frac{1}{m}$-dense subset $P_m$ of $K$. 

We take a symbol $\ast\notin K$ and set $P=\{\ast\}\cup K$. 
For $N\in \mathbb{N}$, $x=(x_n)_{n=0}^{N-1}\in P^N$,
we set
$$x(N,\ast)=\left\{n\in[0,N-1]:x_{n}=\ast\right\}.$$

Now we are going to construct inductively a minimal subsystem $(X,\sigma)$ of the Bernstein space
$(\mathcal{B}^{\mathbb{C}}([a,b]),\sigma)$ of mean dimension $s$.

\vspace{0.2cm} 
\textbf{Part 1. The construction of $(X,\sigma)$.} 

\medskip

We need to construct a subshift $(Y,\sigma)$ of $(K^{\mathbb{Z}}, \sigma)$ firstly as a bridge to $(X,\sigma)$.
\vspace{0.2cm} 

Step $0$. Take $n_0=N_0=1$. Let $B_0=K$ and $Y_0=K^{\mathbb{Z}}$.

\vspace{0.2cm} 

Step $1$. We choose $n_1=N_{1}\in \mathbb{N} $ sufficiently large so that there is 
$x^{(1)}\in P^{N_1}$ such that

$$r < \frac{ |x^{(1)}(N_1, \ast)|}{N_1}\le r+\frac{1}{N_1}.$$

Put 
$$B_1=\left\{(a_n)_{n=0}^{N_1-1}\in K^{N_1}: a_n=x_n^{(1)}, \ \  \forall n\in [0, N_1-1] \backslash x^{(1)}(N_1,\ast)\right\}$$
and 
$$Y_1=\left\{(a_n)_{n\in\mathbb{Z}} \in K^{\mathbb{Z}}:\ \exists \ m \in [0, N_1-1]
\;\text{s.t.}\;(a_{m+i+jN_1})_{i=0}^{N_1-1}\in B_1, \ \ \forall  j\in\mathbb{Z}\right\}.$$

\vspace{0.2cm} 

Step $2$. We choose $n_2\in\mathbb{N}$ sufficiently large such that
$$ \frac{n_2\cdot |x^{(1)}(N_1, \ast)|-|P_1|^{|x^{(1)}(N_1, \ast)|} \cdot N_1}{n_2\cdot N_1}> r.$$
Clearly, we have $n_2>|P_1|^{|x^{(1)}(N_1, \ast)|}$.

\vspace{0.2cm}

Set $N_{2}=n_{2}\cdot N_{1}$. Pick $x^{(2)}=(x_{n}^{(2)})_{n=0}^{N_{2}-1}\in P^{N_2}$ satisfying that
\begin{enumerate}
	\item [(i)] for all $0\le j\le n_2-|P_1|^{|x^{(1)}(N_1, \ast)|}-1$, 
	$$(x^{(2)}_{jN_1}, x^{(2)}_{jN_1+1},\cdots, x^{(2)}_{(j+1)N_1-1})=x^{(1)};$$
	\item [(ii)] for all $n_2-|P_1|^{|x^{(1)}(N_1, \ast)|}\le j\le n_2-1$ and $l\in [0, N_1-1]\backslash x^{(1)}(N_1, \ast)$, $$x^{(2)}_{jN_1+l}=x_l^{(1)};$$
	\item [(iii)] for all $n_2-|P_1|^{|x^{(1)}(N_1, \ast)|}\le j\le n_2-1$ and $l\in x^{(1)}(N_1, \ast)$, 
	$$x^{(2)}_{jN_1+l}\in P_1;$$
	\item [(iv)] $(x^{(2)}_{jN_1}, x^{(2)}_{jN_1+1}, \cdots, x^{(2)}_{(j+1)N_1-1})$ ($n_2-|P_1|^{|x^{(1)}(N_1, \ast)|}\le j\le n_2-1$) are pairwise distinct. 
	
\end{enumerate}

After replacing an appropriate number of $\ast$'s by $0$ from the last coordinate of $x^{(2)}$ (still denoted by $x^{(2)}$), we may require that
$$r< \frac{|x^{(2)}(N_2, \ast)|}{N_2}\le r+\frac{1}{N_2}.$$

Put 
$$B_2=\left\{(a_n)_{n=0}^{N_2-1}\in K^{N_2}: a_n=x_n^{(2)},\ \ \forall  n \in [0, N_2-1]\backslash x^{(2)}(N_2, \ast)\right\}$$
and
$$Y_2=\left\{(a_n)_{n\in\mathbb{Z}} \in K^{\mathbb{Z}}: \ \exists \ m \in [0, N_2-1] \
\text{s.t.}\ (a_{m+i+jN_2})_{i=0}^{N_{2}-1}\in B_2, \  \ \forall j\in\mathbb{Z}\right\}.$$

\vspace{0.2cm} 

Assume that $x^{(k-1)}$, $B_{k-1}$ and $Y_{k-1}$ ($k\geq 2$) have been constructed. Now we proceed to construct $x^{(k)}$, $B_{k}$ and $Y_{k}$.

\vspace{0.2cm} 

Step $k$. We choose $n_k\in\mathbb{N}$ sufficiently large such that 
$$\frac{n_k \cdot |x^{(k-1)}(N_{k-1}, \ast)|- |P_{k-1}|^{|x^{(k-1)}(N_{k-1}, \ast)|}\cdot N_{k-1}}{n_{k}\cdot N_{k-1}}>r.$$
Clearly, we have $n_k>|P_{k-1}|^{|x^{(k-1)}(N_{k-1}, \ast)|}$.

Set $N_{k}=n_{k}\cdot N_{k-1}$. Pick $x^{(k)}=(x_{n}^{(k)})_{n=0}^{N_{k}-1}\in P^{N_{k}}$ satisfying that
\begin{enumerate}
	\item [(i)] for all $0\le j\le n_k-|P_{k-1}|^{|x^{(k-1)}(N_{k-1}, \ast)|}-1$, 
	$$(x^{(k)}_{jN_{k-1}}, x^{(k)}_{jN_{k-1}+1}, \cdots, x^{(k)}_{(j+1)N_{k-1}-1})=x^{(k-1)};$$
	\item [(ii)] for all $n_k-|P_{k-1}|^{|x^{(k-1)}(N_{k-1}, \ast)|}\le j\le n_k-1$ and $l\in [0, N_k-1]\backslash x^{(k-1)}(N_{k-1},\ast)$,
	$$x^{(k)}_{j N_{k-1}+l}=x_l^{(k-1)};$$
	
	\item [(iii)] for all $n_k-|P_{k-1}|^{|x^{(k-1)}(N_{k-1}, \ast)|}\le j\le n_k-1$ and $l\in x^{(k-1)}(N_{k-1},\ast)$, 
	$$x^{(k)}_{jN_{k-1}+l}\in P_{k-1};$$
	
	\item [(iv)] $(x^{(k)}_{jN_{k-1}}, x^{(k)}_{jN_{k-1}+1}, \cdots, x^{(k)}_{(j+1)N_{k-1}-1})$ ($n_k-|P_{k-1}|^{|x^{(k-1)}(N_{k-1}, \ast)|}\le j\le n_k-1$) are pairwise distinct.
\end{enumerate}

After replacing an appropriate number of $\ast$'s by $0$ from the last coordinate of $x^{(k)}$ (still denoted by $x^{(k)}$), we may require    
\begin{equation}\label{eq:proportion_xk}
r<\frac{ |x^{(k)}(N_{k},\ast)|}{N_{k}}\le r+ \frac{1}{N_{k}}.
\end{equation}
Put
$$B_k=\left\{(a_n)_{n=0}^{N_{k}-1}\in K^{N_{k}}: a_n=x_n^{(k)}, \ \ \forall n\in [0, N_k-1]\backslash
x^{(k)}(N_{k},\ast)\right\}$$
and
$$Y_k=\left\{(a_n)_{n\in\mathbb{Z}}\in K^{\mathbb{Z}}:\ \exists\ m\in [0, N_k-1] \\
\ \text{s.t.}\ (a_{m+i+jN_{k}})_{i=0}^{N_{k}-1}\in B_k,\ \forall  j\in\mathbb{Z}\right\}.$$

\vspace{0.2cm}

Note that $Y_k$ is a nonempty closed $\sigma$-invariant subset of $K^{\mathbb{Z}}$ 
and that $Y_{k+1}\subset Y_k$ for all $k\ge 1$. Set 
$$Y=\bigcap_{k=1}^{\infty}Y_k.$$ 
Thus $(Y,\sigma)$ is a nonempty closed $\sigma$-invariant subset of $K^{\mathbb{Z}}$, i.e.,
$(Y,\sigma)$ is a subshift of $(K^{\mathbb{Z}}, \sigma)$. 

\vspace{0.2cm} 

To construct the targeted system $(X,\sigma)$, we need to convert discrete signals into continuous ones. In order to do this, we take a rapidly decreasing function $f: \mathbb{R}\to\mathbb{C}$ band-limited in $[a+\epsilon_{0},b]$ with 
$$f(0)=1\;\;\;\text{and}\;\;\;f(\frac{p}{q}n)=0\;(n\in\mathbb{Z}\setminus\{0\})$$
by Corollary \ref{cor:interpolation}.
Moreover, for some positive constant $C_1$, we have
$$|f(x)|\le \frac{C_1}{1+x^2}, \ \ \forall x\in \mathbb{R}.$$

Let $\mathbb{Z}_p=\mathbb{Z}/{p\mathbb{Z}}=\{0,1,\dots,p-1\}$ be the $p$-cyclic group with the discrete topology. Consider the product spaces $\mathcal{B}^{\mathbb{C}}([a+\ep_0, b]) \times\mathbb{Z}_p$ and $Y\times \mathbb{Z}_p$. 
We define 
$$T:\mathcal{B}^{\mathbb{C}}([a+\ep_0, b]) \times\mathbb{Z}_p\to\mathcal{B}^{\mathbb{C}}([a+\ep_0, b]) \times\mathbb{Z}_p$$ 
and $$S:Y\times\mathbb{Z}_p\to Y\times\mathbb{Z}_p$$ 
as follows: for all $g\in\mathcal{B}^{\mathbb{C}}([a+\ep_0, b])$ and $t\in\mathbb{R}$,
$$T(g(t), i )=\begin{cases}
(g(t+1), i+1)   &0\le i\le p-2\\
(g(t+1), 0)  &i=p-1
\end{cases}$$
and for all $y\in Y$,
\begin{equation}\label{def:S}
S(y,i)=
\begin{cases}
(y,i+1)   &0\le i\leq p-2 \\
(\sigma(y), 0)  &i=p-1
\end{cases}
\end{equation}
respectively.
Clearly, $T$ and $S$ are homeomorphisms (We consider the product topology of both $\mathcal{B}^{\mathbb{C}}([a+\ep_0, b]) \times\mathbb{Z}_p$ and $Y\times\mathbb{Z}_p $).

Define a mapping $F$ from $Y\times\mathbb{Z}_p$ to $\mathcal{B}^{\mathbb{C}}([a+\ep_0, b]) \times \mathbb{Z}_p$:
$$Y\times\mathbb{Z}_p\to\mathcal{B}^{\mathbb{C}}([a+\ep_0, b]) \times \mathbb{Z}_p$$
$$((a_n)_{n\in \mathbb{Z}}, i)\mapsto
\left(\frac{1}{C}\sum\limits_{n\in \mathbb{Z}}\sum_{j=0}^{q-1}\big(a_n^{j,1}+a_n^{j,2}\sqrt{-1}\big)f\big(x-\frac{p}{q}(nq+j)+i\big), i\right),$$
where $a_{n}=(a_{n}^{j})_{j=0}^{q-1}\in K=([0,1]^{2})^q$, $a_{n}^{j}=(a_{n}^{j,1},a_{n}^{j,2})\in[0,1]^{2}$ and 
$$C=\max_{i\in \mathbb{Z}_p}\max_{x\in\mathbb{R}}\sum_{n\in\mathbb{Z}}\sum_{j=0}^{q-1}\frac{C_{1}}{1+(x- \frac{p}{q}(n+j)+i)^{2}}<\infty.$$
Define a mapping $G$ from $\mathcal{B}^{\mathbb{C}}([a+\ep_0, b]) \times\mathbb{Z}_p$: for all $(g,i)\in\mathcal{B}^{\mathbb{C}}([a+\ep_0, b]) \times\mathbb{Z}_p$,
$$G(g(x), i)=\frac{g(x)+H(i)(x)}{2} \ \ \text{for all} \ \ x\in \mathbb{R},$$
where $H: \mathbb{Z}_p \to \mathcal{B}^{\mathbb{C}}([a, a+\frac{\ep_0}{2}])$ is defined by
$i\mapsto H(i)(x)=e^{-2\pi \sqrt{-1}c(x+i)}$. 
Clearly, $H$ is continuous and injective.

Set $$X=G\circ F(Y\times \mathbb{Z}_p).$$

\bigskip

\textbf{Part 2. $(X,\sigma)$ is a subsystem of $(\mathcal{B}^{\mathbb{C}}([a, b]), \sigma)$ which is topologically conjugate to $(Y\times Z_{p},S)$.}

\medskip

It suffices to show that $F$ and $G$ are embeddings from $(Y\times\mathbb{Z}_p,S)$ to $(\mathcal{B}^{\mathbb{C}}([a+\ep_0, b]) \times\mathbb{Z}_p, T)$ and from $(\mathcal{B}^{\mathbb{C}}([a+\ep_0, b]) \times\mathbb{Z}_p, T)$ to $(\mathcal{B}^{\mathbb{C}}([a, b]),\sigma)$, respectively.

\medskip

We first verify the embeddability of $F$. To show the continuity of $F$, we consider a compatible metric $\rho$  on $Y\times Z_{p}$: for any $x,y\in Y$ and $i,j\in \mathbb{Z}_p$, 
$$\rho((x,i),(y,j))=\begin{cases}
D_1(x,y)   &i=j\\
2 &i\neq j
\end{cases},$$
and a compatible metric ${\bf d}$ on $\mathcal{B}^{\mathbb{C}}([a+\ep_0, b])\times \mathbb{Z}_p$: for any $g_1, g_2\in \mathcal{B}^{\mathbb{C}}([a+\ep_0, b])$ and $ i, j\in \mathbb{Z}_p$,
$${\bf d}((g_1, i), (g_2, j) )=\begin{cases}
D(g_1, g_2)   &i=j\\
\ \ \ \ 2  &i\neq j
\end{cases}.$$

Fix $(a, i)=((a_n)_{n\in \mathbb{Z}}, i)\in Y\times Z_{p}$.
Take a sequence 
$$\{(a^{(m)}, i_m)=((a^{(m)}_n)_{n\in \mathbb{Z}}, i_m)\}_{m\in \mathbb{N}}\subset Y\times \mathbb{Z}_{p}$$
with $ \rho((a^{(m)}, i_m), (a, i))\to 0 $ as $m\to \infty$. Without loss of generality, we may assume $i_m=i$ for all $m\in \mathbb{N}$. Put
$$F((a^{(m)}, i))=(g^{(m)}, i) \ \ \text{and}\ \ F((a, i))=(g, i)$$
for some $g^{(m)},g\in\mathcal{B}^{\mathbb{C}}([a+\ep_0,b])$.

Fix $N\in\mathbb{N}$. For any $\delta>0$, there exists $L\in \mathbb{N}$ sufficiently large such that 
$$\frac{1}{C}\sum_{|n|>L}\sum_{j=0}^{q-1} \frac{C_1}{1+(x- \frac{p}{q}(n+j)+i)^2}< \frac{\delta}{2q}\;\;\; (\forall i\in\mathbb{Z}_p, \;x\in[-N,N]).$$
For such $L\in \mathbb{N}$, we take $M\in \mathbb{N}$ sufficiently large satisfying 
$$||a^{(m)}_n-a_n||_{{\infty}}<\frac{\delta}{2} \ \ (\forall n\in [-L, L],\; m\geq M),$$
thus
$$|(a_n^{(m)})^{j}-(a_n)^j|_2<\frac{\delta}{2}, \ \ \forall j\in \{0, 1, \cdots, q-1\}, n\in [-L, L],m\ge M,$$
where $a_{n}^{(m)}=((a_n^{(m)})^{j})_{j=0}^{q-1}, a_n=(a_n^j)_{j=0}^{q-1}\in K$.

We remark here that the distance $|\cdot|$ on $\mathbb{C}$ is defined by 
$|a+b\sqrt{-1}|=\max\{|a|, |b|\}$ for all $a, b\in \mathbb{R}$.

Therefore we have for all $m\ge M$ and $x\in [-N, N]$,
\begin{align*}
|g^{(m)}(x)-g(x)|&\le  \frac{1}{ C} \sum_{n\in \mathbb{Z}} \sum_{j=0}^{q-1}|(a_n^{(m)})^{j}-(a_n)^j|_2 \cdot|f( x- \frac{p}{q}(nq+j)+i)|\\
&=\frac{1}{  C} \sum_{|n|\le L} \sum_{j=0}^{q-1}|(a_n^{(m)})^{j}-(a_n)^j|_2 \cdot |f( x- \frac{p}{q}(nq+j)+i)|\\
&+\frac{1}{ C} \sum_{|n|>L} \sum_{j=0}^{q-1}|(a_n^{(m)})^{j}-(a_n)^j|_2 \cdot |f( x- \frac{p}{q}(nq+j)+i)|\\
&\le \frac{\delta}{2 C} \sum_{n\in \mathbb{Z}}\sum_{j=0}^{q-1} \frac{C_1}{1+(x-\frac{p}{q} (n+j)+i)^2} \\
&+ \frac{ q}{C}\sum_{|n|>L}\sum_{j=0}^{q-1}\frac{C_1}{1+ (x-\frac{p}{q}(n+j)+i)^2}\\
&\le \frac{\delta}{2}+\frac{\delta}{2}=\delta.
\end{align*}
This implies 
$$\lim_{m\to \infty} \left\| g^{(m)}- g \right\|_{L^{\infty}([-N, N])}=0.$$
Since $N\in \mathbb{N}$ is arbitrary, we deduce that 
$$\lim_{m\to \infty} D(g^{m}, g)=0$$
and hence
$$\lim_{m\to \infty} {\bf d}( F( (a^{(m)}, i)), F( (a, i)))=0.$$
So $F$ is continuous.

To prove the injectivity of $F$, we assume $F((a, i_1) )=F((b, i_2))$ for $(a, i_1), (b, i_2)\in Y\times \mathbb{Z}_{p}$. 
We assume that 
$$a=(a_n)_{n\in \mathbb{Z}}, b=(b_n)_{n\in \mathbb{Z}}, a_n=(a_n^j)_{j=0}^{q-1}, b_n=(b_n^j)_{j=0}^{q-1}$$
and
$$ a_n^j=(a_n^{j,1}, a_n^{j,2}),  b_n^j=(b_n^{j,1}, b_n^{j,2}).$$
It implies that 
$$\sum_{n\in \mathbb{Z}} \sum_{j=0}^{q-1}(a_n^{j,1}+a_n^{j,2}\sqrt{-1})f(x-\frac{p}{q}(nq+j)+i_1)=\sum_{n\in \mathbb{Z}} \sum_{j=0}^{q-1}(b_n^{j,1}+b_n^{j,2}\sqrt{-1})f(x-\frac{p}{q}(nq+j)+i_2)$$ 
for all $x\in\mathbb{R}$ and $i_1=i_2$. For every $m\in \mathbb{Z}$ and $j_0\in \{0, \cdots, q-1\}$, by letting $x=\frac{p}{q}(mq+ j_0)-i_1$ in the above equality we have $a_m^{j_0}=b_m^{j_0}$ and thus $a=b$. It is clear that 	$F\circ S=T\circ F$.
So $F$ is an embedding from $(Y\times\mathbb{Z}_p,S)$ to $(\mathcal{B}^{\mathbb{C}}([a+\ep_0, b]) \times\mathbb{Z}_p, T)$.

\medskip

Now we check the embeddability of $G$. Obviously, $G$ is continuous and $G \circ T=\sigma\circ G$. Since for any $(g,i)\in\mathcal{B}^{\mathbb{C}}([a+\ep_0, b]) \times\mathbb{Z}_p$, 
$$g(x)\in \mathcal{B}^{\mathbb{C}}([a+\ep_0, b]) \ \ \text{and}\ \  e^{-2\pi\sqrt{-1}cx}\in \mathcal{B}^{\mathbb{C}}([a,a+\frac{\ep_0}{2}]),$$
we have that $G$ is injective. Therefore, $G$ is an embedding from $(\mathcal{B}^{\mathbb{C}}([a+\ep_0, b]) \times\mathbb{Z}_p, T)$ to $(\mathcal{B}^{\mathbb{C}}([a, b]),\sigma)$.

\bigskip

\textbf{Part 3. Minimality of $(X,\sigma)$.} 
\medskip

Since the dynamical system $(X, \sigma)$ is topologically conjugate to $(Y\times \mathbb{Z}_p, S)$ as proved in Part 2, 
it is suffices to show that the system $(Y\times \mathbb{Z}_p, S)$ is minimal.

\medskip

Fix $\widetilde{x}=(x, i_1), \widetilde{y}=(y, i_2)\in Y\times \mathbb{Z}_p$ and $\ep>0$. We choose $L\in\mathbb{N}$ depending only on $\ep>0$ such that if two points $a=(a_n)_{n\in \mathbb{Z}}, b=(b_n)_{n\in \mathbb{Z}}$ coming from $K^{\mathbb{Z}}$ satisfy $||a_{ n }-b_{ n } ||_{\infty}< \ep/2$ ($-L\le n\le L$), then 
$$\rho\big((a,i), (b,i)\big)<\ep$$ for all $i\in \mathbb{Z}_{p}$. We take $k\in\mathbb{N}$ large enough such that $N_{k-1}>2L+1$ and $2/k<\ep$.

\textbf{Case 1. $i_1=i_2$.} Set $i_1=i_2=i_0$.
Since $y\in Y\subset Y_{k}$, by the construction of $Y_{k}$ we know that there exists $n_1\in \{0, 1, \dots, N_k-1\}$ such that 
$$(y_{n_1+i+jN_{k}})_{i=0}^{N_{k}-1}\in B_k\ \ \ (j\in \mathbb{Z}). $$
Assume that $[-L, L]\subset [n_1+(j_0-1)N_k, n_1+(j_0+1)N_k-1]$ for some $j_0\in \mathbb{Z}$. Since $x\in Y\subset Y_{k+1}$, there exists $n_2\in \{0, 1, \dots, N_{k+1}-1\}$ such that $$(x_{n_2+i+jN_{k+1}})_{i=0}^{N_{k+1}-1}\in B_{k+1}\ \ \ (j\in\mathbb{Z}).$$
By the construction of $B_{k+1}$ and noting the relation between $B_{k}$ and $B_{k+1}$, we may find $M_0\in N_{k}\mathbb{Z}$ with 
$$||(\sigma^{M_0}x)_{n}-y_n ||_{\infty}\le\frac{1}{k}<\frac{\ep}{2}\ \ \ \big(n\in\big\{n_1+j_0N_{k}, \dots, n_1+(j_0+1)N_{k}-1)\big\}\big)$$ 
and
$$(\sigma^{M_0}x)_{n}=y_{n}\ \ \ \big(n\in\big\{n_1+(j_0-1)N_k,\dots, n_1+j_0N_k-1 \big\}\big).$$
Thus, 
$$\rho((\sigma^{M_0}x,i_{0}),(y,i_{0}))<\ep.$$
Observe that $S^{M_0p}(\widetilde{x})=(\sigma^{M_0}x, i_0)$.
So 
$$\rho(S^{M_0p}(\widetilde{x}), \widetilde{y})<\ep.$$

\textbf{Case 2. $i_1\neq i_2$.} We may reduce this case to Case 1 by taking some $M_1\in\mathbb{Z}$ with 
$$S^{M_1}\widetilde{x}\in Y\times \{i_2\}.$$ 
By Case 1, there exists $M_2\in \mathbb{Z}$ such that
$$\rho(S^{M_2}( S^{M_1}\widetilde{x})),\widetilde{y})<\ep.$$
we conclude that $(Y\times \mathbb{Z}_p, S)$ is minimal.

\bigskip

\textbf{Part 4. The calculation of mean dimension of $(X, \sigma)$.} 

\medskip

We are going to prove $\text{mdim}(X, \sigma)=s$, which is equivalent to $\text{mdim}(Y\times \mathbb{Z}_p, S)=s$.  We first show $\text{mdim}(Y\times \mathbb{Z}_p, S)\geq s$. 

We fix a point $z=(z_{n})_{n\in\mathbb{Z}}\in K^{\mathbb{Z}}$ satisfying that $z|_{jN_k}^{(j+1)N_k-1}\in B_k$ for all $k\in\mathbb{N}$ and all $j\in \mathbb{Z}$. For each $k\in \N$ we define a mapping $F_{k}$ as follows:
$$F_k:B_{k}\to Y\times \mathbb{Z}_p,\ \ a=(a_n)_{n=0}^{N_k-1} \mapsto ((c_n)_{n\in \mathbb{Z}},0 ),$$
where 
$$c_n=\begin{cases}
a_n, &n\in\{0,\dots,N_{k}-1\}\\
z_n, &n\in \mathbb{Z}\setminus\{0,\dots,N_{k}-1\}.\\
\end{cases}$$
Clearly, $F_k(a)$ (where $a\in B_k$) is indeed in $Y\times \mathbb{Z}_p$ and $F_k$ is continuous. We have the following result about $F_{k}$.

\begin{claim}\label{claim:distanceinceeasing}
	The mapping $F_k:(B_{k},||\cdot||_{\infty})\to(Y\times \mathbb{Z}_p,\rho_{N_{k}p})$ is distance-increasing, i.e. for all $a,b\in B_k$
	$$||a-b||_{\infty}\le\rho_{N_kp}(F_k(a), F_{k}(b)).$$
\end{claim}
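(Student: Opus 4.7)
The plan is to unwind the definitions of $S$, $\rho$, and the Bowen metric $\rho_{N_{k}p}$ and extract the bound by looking at specific iterates along the $S$-orbit.

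First I would observe that the map $S$ defined in \eqref{def:S} satisfies $S^{p}(y,0)=(\sigma(y),0)$ and more generally $S^{jp}(y,0)=(\sigma^{j}(y),0)$ for any $j\in\mathbb{Z}$. Writing $F_{k}(a)=(c,0)$ and $F_{k}(b)=(d,0)$ with $c,d\in K^{\mathbb{Z}}$ as in the definition of $F_{k}$, this gives
$$S^{jp}(F_{k}(a))=(\sigma^{j}c,0),\qquad S^{jp}(F_{k}(b))=(\sigma^{j}d,0).$$
In particular, since the second coordinates agree, for each $j$ the distance between these two points in $\rho$ is simply $D_{1}(\sigma^{j}c,\sigma^{j}d)$.

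Next I would restrict the maximum defining $\rho_{N_{k}p}$ to the sub-sequence of times $\{jp:0\le j\le N_{k}-1\}\subset\{0,1,\dots,N_{k}p-1\}$. This yields
$$\rho_{N_{k}p}(F_{k}(a),F_{k}(b))\ge\max_{0\le j\le N_{k}-1}D_{1}(\sigma^{j}c,\sigma^{j}d).$$
Since every term in the series defining $D_{1}$ is nonnegative, keeping only the $n=0$ term gives $D_{1}(\sigma^{j}c,\sigma^{j}d)\ge\|c_{j}-d_{j}\|_{\infty}$. For $0\le j\le N_{k}-1$ we have $c_{j}=a_{j}$ and $d_{j}=b_{j}$ by the definition of $F_{k}$, so $\|c_{j}-d_{j}\|_{\infty}=\|a_{j}-b_{j}\|_{\infty}$. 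Taking the maximum over $j$ recovers $\|a-b\|_{\infty}=\max_{0\le j\le N_{k}-1}\|a_{j}-b_{j}\|_{\infty}$, yielding the desired inequality.

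There is no real obstacle here: the argument is essentially a direct unpacking of definitions, and the only subtlety is to notice that the skew-product structure of $S$ makes the $p$-th power act as a pure shift on the $Y$-coordinate, so that the Bowen metric over $N_{k}p$ steps of $S$ dominates the ordinary $N_{k}$-step Bowen metric of $\sigma$, and then to extract individual coordinates from $D_{1}$ using the $n=0$ weight.
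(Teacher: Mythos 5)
Your proof is correct and follows essentially the same line as the paper's: restricting the Bowen maximum over $N_kp$ iterates of $S$ to the times $np$ with $0\le n\le N_k-1$, using $S^{np}(y,0)=(\sigma^n y,0)$, and then bounding $D_1$ from below by the $n=0$ term to recover $\|a_n-b_n\|_\infty$. Nothing essential differs from the argument given in the paper.
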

\begin{proof}[Proof of Claim \ref{claim:distanceinceeasing}.]
	Recall that 
	$$\rho_{N_kp}(\widetilde{a}, \widetilde{b})=\max_{0\le n\le N_kp-1}\rho(S^n \widetilde{a}, S^n \widetilde{b}), \ \ \widetilde{a}, \widetilde{b}\in Y\times \mathbb{Z}_{p}.$$
	Let $a, b\in B_k$. Assume that $F_k(a)=(c_1, 0)$ and $F_k(b)=(c_2, 0)$ for some $c_1, c_2\in Y$. Then for any $0\le n\le N_k-1$, we have

\begin{align*}
\rho(S^{np}F_k(a), S^{np}F_k(b) )&= D_1(\sigma^nc_1, \sigma^nc_2)\\
&\ge ||(\sigma^nc_1)_0- (\sigma^nc_2)_0||_{\infty}\\
&=||a_n-b_n||_{\infty}.
\end{align*}
Thus for any $a, b\in B_k$, we have
$$||a-b||_{\infty}\le\rho_{N_kp}(F_k(a), F_{k}(b)).$$	
It follows from the definition that for every $\ep>0$ and every $k\in\mathbb{N}$,  
$$\text{Widim}_{\ep}(B_k, ||\cdot||_{\infty}))\le\text{Widim}_{\ep}(Y\times \mathbb{Z}_p, \rho_{N_kp})$$
and hence 
$$\text{Widim}_{\ep}(Y\times \mathbb{Z}_p, \rho_{N_kp})\geq2q\cdot|x^{(k)}(N_k, \ast)|>2q\cdot(rN_{k}).$$
by Lemma \ref{Widim of [0,1]^l} and \eqref{eq:proportion_xk}.
\end{proof}
Therefore, we have
\begin{align*}
\text{mdim}(Y\times \mathbb{Z}_p,S)=&\lim\limits_{\ep\to 0}\lim\limits_{k\to \infty} \frac{\text{Widim}_{\ep}(Y\times \mathbb{Z}_p, \rho_{N_kp})}{N_kp} \\
\geq&\lim\limits_{\ep\to 0}\lim\limits_{k\to\infty} \frac{2qrN_{k}}{N_kp}\\
=&2r\frac{q}{p}=s.
\end{align*}

\medskip

Now we prove $\text{mdim}(Y\times \mathbb{Z}_p, S)\leq s$. For each $k\in\mathbb{N}$, we consider a mapping from $Y_{k}\times \mathbb{Z}_p$ to itself defined similar to $S$ (see \eqref{def:S}) and denote it still by $S$ for the saving of symbols. Since $(Y\times \mathbb{Z}_p,S)$ is a subsystem of $(Y_k\times \mathbb{Z}_p,S)$ for any $k\in \N$, it suffices to show that 
$$\text{mdim}(Y_k\times \mathbb{Z}_p,S)\le s+\epsilon_{k}\;\;\;(k\in\mathbb{N})$$
for some positive sequence $\{\epsilon_{k}\}_{k=1}^{\infty}$ tending to $0$.
\vspace{0.2cm}

Fix $k\in\mathbb{N}$ for the moment. Given $\ep>0$. Choose $L\in\mathbb{N}$ large enough such that if two points $a, b\in K^{\mathbb{Z}}$ satisfy that  $a|_{[-L, L]}=b|_{[-L, L]}$, then $\rho\big((a,i), (b,i)\big)<\ep$ for all $i\in \mathbb{Z}_p$.
\vspace{0.2cm}

For each $m\in\mathbb{N}$, define 
$$H_{k,m}: (Y_k\times \mathbb{Z}_p, \rho_{mp})\to ([0, 1]^2)^{[-L, L+m+1]}\times \mathbb{Z}_p,\;\;\;(a, i)\mapsto (a|_{[-L, L+m+1]}, i).$$

\begin{claim}\label{H injective}
	The mapping $H_{k,m}$ defined above is an $\ep$-embedding with respect to $\rho_{mp}$.
\end{claim}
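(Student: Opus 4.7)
The plan is to show that $H_{k,m}$ satisfies the $\epsilon$-embedding property, namely that coincidence of images forces closeness in the dynamical metric $\rho_{mp}$. Continuity is automatic, since $H_{k,m}$ is the restriction map in the first coordinate composed with the identity in the $\mathbb{Z}_p$-coordinate, so the essential task is to analyze what happens when $H_{k,m}((a,i_1)) = H_{k,m}((b,i_2))$. In this situation $i_1 = i_2 =: i$ and $a|_{[-L,L+m+1]} = b|_{[-L,L+m+1]}$, and I need to bound $\rho(S^n(a,i), S^n(b,i))$ uniformly for $0 \le n \le mp-1$.

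The key observation is how the skew product $S$ iterates: since $S$ increments the $\mathbb{Z}_p$-coordinate and applies $\sigma$ precisely when that increment wraps around, one checks directly from the definition that $S^n(a,i) = (\sigma^{j_n} a,\, r_n)$, where $j_n = \lfloor (n+i)/p \rfloor$ and $r_n = (n+i) \bmod p$. For $0 \le n \le mp-1$ and $0 \le i \le p-1$ we have $n+i \le (m+1)p - 2$, so $0 \le j_n \le m$. Consequently both orbits land in the same $\mathbb{Z}_p$-fiber at each time step, so $\rho(S^n(a,i), S^n(b,i)) = D_1(\sigma^{j_n} a,\, \sigma^{j_n} b)$.

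Next, for each $\ell \in [-L,L]$ and each $j \in [0,m]$ the index $\ell + j$ lies in $[-L, L+m] \subseteq [-L, L+m+1]$, so the hypothesis on $H_{k,m}$ yields $a_{\ell+j_n} = b_{\ell+j_n}$; equivalently $(\sigma^{j_n}a)|_{[-L,L]} = (\sigma^{j_n}b)|_{[-L,L]}$. By the defining property of $L$ chosen just before the claim, this forces $D_1(\sigma^{j_n} a, \sigma^{j_n} b) = \rho((\sigma^{j_n} a, r_n), (\sigma^{j_n} b, r_n)) < \epsilon$. Taking the maximum over $0 \le n \le mp-1$ gives $\rho_{mp}((a,i), (b,i)) < \epsilon$, which is exactly the $\epsilon$-embedding condition.

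The argument is mostly bookkeeping; the only nontrivial point is tracking how the combined index $n + i$ splits into the $\sigma$-shift count $j_n$ and the $\mathbb{Z}_p$-coordinate $r_n$, and then matching the window length $L+m+1$ appearing in the image of $H_{k,m}$ to the maximum shift $j_n = m$ that one encounters along a Bowen orbit of length $mp$ under $S$. I do not foresee a serious obstacle, merely care with indices.
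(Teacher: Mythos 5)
Your proof is correct and follows essentially the same route as the paper: reduce to equal $\mathbb{Z}_p$-coordinates, write $S^n(a,i)=(\sigma^{j_n}a,r_n)$ with $j_n=\lfloor(n+i)/p\rfloor$ (the paper's $l+i=l'p+l''$), observe that the coordinate window $[-L,L+m+1]$ covers all shifted windows $[-L+j_n,L+j_n]$, and invoke the defining property of $L$. In fact your bound $j_n\le m$ is slightly sharper than the paper's stated range $l'\in\{0,\dots,m+1\}$, though both are comfortably within the $L+m+1$ window.
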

\begin{proof}[Proof of Claim \ref{H injective}]
	Assume that for some $(a,i),(b,j)\in Y_{k}\times \mathbb{Z}_{p}$, $H_{k,m}(a,i)=H_{k,m}(b,j)$.
	It follows from the definition that  
	$$a|_{[-L, L+m+1]}=b|_{[-L, L+m+1]}\;\;\; \text{and}\;\;\; i=j.$$
	
	For any $0\le l\le mp-1$, we observe that
	$$S^l(a,i)=(\sigma^{l'}(a), l'')\;\;\; \text{and}\;\;\; S^l(b,i)=(\sigma^{l'}(b), l'')$$
	where $l+i=l'p+l''$ for some $l'\in\{0, 1, \dots, m+1\}$ and $l''\in\{0,1,\dots,p-1\}$. Since $a|_{[-L, L+m+1]}=b|_{[-L, L+m+1]}$, we get by the choice of $L$ that 
	$$\rho(S^l(a,i), S^l(b,j))=D_1(\sigma^{l'}(a),\sigma^{l'}(b))<\epsilon.$$
	So 
	$$\rho_{mp}((a,i),(b,j))=\max_{0\leq l\leq mp-1}\rho(S^l(a,i), S^l(b,j))<\epsilon.$$
	We conclude that $H_{k,m}$ is an $\ep$-embedding with respect $\rho_{mp}$.
\end{proof}

From Claim \ref{H injective}, we have that for all $\epsilon>0$ and all $k,m\in\mathbb{N}$,
$$\text{Widim}_{\ep}(Y_k\times \mathbb{Z}_p,\rho_{mp})\le \text{Widim}_{\ep}(H_{k,m}(Y_k\times \mathbb{Z}_p))\le\text{dim}(H_{k,m}(Y_k\times \mathbb{Z}_p))$$
and the constructions of the set $Y_{k}$ and the mapping $H_{k,m}$ make sure that 
\begin{align*}
\text{dim}(H_{k,m}(Y_k\times \mathbb{Z}_p))& \le \left(\Big\lceil\frac{2L+m+2}{N_k}\Big\rceil +1\right)\cdot\text{dim}(B_k)\\
&\leq \left(\Big\lceil\frac{2L+m+2}{N_k}\Big\rceil+1\right)\cdot\left(2N_{k}q\left(r+\frac{1}{N_{k}}\right)\right),
\end{align*}
where $\Big\lceil\frac{2L+m+2}{N_k}\Big\rceil$ is the integer part of $\frac{2L+m+2}{N_k} $.
Therefore,
\begin{align*}
\text{mdim}(Y_k\times \mathbb{Z}_p,S)&=\lim\limits_{\ep\to 0}\lim\limits_{m\to\infty}\frac{\text{Widim}_{\ep}(Y_k\times \mathbb{Z}_p,\rho_{mp})}{mp}\\
&\le\lim\limits_{\ep\to 0}\lim\limits_{m\to\infty}\frac{1}{mp}\cdot \left(\Big\lceil\frac{2L+m+2}{N_k}\Big\rceil+1\right)\cdot\left(2N_{k}q\left(r+\frac{1}{N_{k}}\right)\right)\\
&\le\frac{2q}{p}\cdot\left(r+\frac{1}{N_{k}}\right).
\end{align*}
Letting $k\to \infty$,
We get that
$$\text{mdim}(Y\times Z_p,S)\le\frac{2qr}{p}=s.$$
So far, we have proved Theorem \ref{Main Theorem 1} (\ref{1}).

\end{proof}

The remaining of this section is devoted to the proofs of Theorem \ref{Main Theorem 1} (\ref{2}) and Theorem \ref{Main Theorem 2} based on Theorem \ref{Main Theorem 1} (\ref{1}). 

\begin{proof}[Proof of Theorem \ref{Main Theorem 1} (\ref{2})]
	Fix $0\le t < 2c$ and choose $0<\ep_0<c$ with $t<2(c-\ep_0)$.
	By Theorem \ref{Main Theorem 1} (\ref{1}), there exists a minimal subsystem $(X_1,\sigma)$ of $\mathcal{B}^{\mathbb{C}}([\ep_0, c])$ with mean dimension $t$.
	Define 
	$$H: (\mathcal{B}^{\mathbb{C}}([\ep_0, c]),\sigma)\to(\mathcal{B}([-c, c]),\sigma), \ \ \ \ f\mapsto\frac{f+\bar{f}}{2}.$$
	Clearly, the mapping $H$ is continuous and $H\circ\sigma=\sigma\circ H$. The injectivity of $H$ follows from the fact that $f-g=\overline{g-f}$ $(f,g\in\mathcal{B}^{\mathbb{C}}([\ep_0, c])$ implies $\mathcal{F}(f-g)=0$ and hence $f=g$. 	
	\vspace{0.2cm}
	
	Put $X=H(X_1)$. The system $(X, \sigma)$ is topologically conjugate to $(X_1, \sigma)$ and thus is minimal with mean dimension $t$.
	
\end{proof}

\begin{proof}[Proof of Theorem \ref{Main Theorem 2}]
	Let $0\le a <1$ and fix $0\le r < a$. By Theorem \ref{Main Theorem 1} (\ref{2}), there exists a minimal subsystem $(X_1, \sigma)$ of $(\mathcal{B}([-\frac{a}{2},\frac{a}{2}]),\sigma)$ with mean dimension $r$. Define 
	$$H_1:(\mathcal{B}([-\frac{a}{2}, \frac{a}{2}]),\sigma)\to([0, 1]^{\mathbb{Z}}, \sigma), \ \ \ \ f\mapsto f|_{\mathbb{Z}}=(f(n))_{n\in \mathbb{Z}}.$$
	
	Obviously, the mapping $H_1$ is continuous and $H_1\circ\sigma=\sigma\circ H_1$.  To show the injectivity of $H_1$, we assume that 
	$$H_1(f)=H_1(g) \ \  \text{for}\ \  f,g\in \mathcal{B}([-\frac{a}{2},\frac{a}{2}]),$$ 
	then 
	$$(f-g)(n)=0 \ \  \text{for all} \ \ n\in \mathbb{Z}.$$ 
	Since $f-g\in \mathcal{B}([-\frac{a}{2},\frac{a}{2}])$, we have $f=g$ by Theorem \ref{thm:sampling}.
	
	\vspace{0.2cm}
	
	Put $X=H_1(X_1)$. Then the system $(X, \sigma)$ is a minimal subsystem of $(\mathcal{B}([-\frac{a}{2}, \frac{a}{2}]),\sigma)$ and $([0, 1]^{\mathbb{Z}}, \sigma)$ with mean dimension $r$.
	
\end{proof}



\begin{thebibliography}{99}
 

\bibitem{Dou17}
D. Dou. \emph{Minimal subshifts of arbitrary mean topological dimension}. Discrete Contin. Dyn. Syst.
{\bf 37} (2017), 1411--1424.

\bibitem{GT-20}
Y. Gutman, M. Tsukamoto. \emph{Embedding minimal dynamical systems into Hilbert cubes}. 
Invent. math. {\bf 221} (2020), 113--166. 




\bibitem{Gromov-99}
M. Gromov. \emph{Topological invariants of dynamical systems and spaces of holomorphic maps: I},
Math. Phys. Anal. Geom. {\bf 2} (1999), 323--415.

\bibitem{GQT19} Y. Gutman, Y. Qiao, M. Tsukamoto. \emph{Application of signal analysis to the embedding problem of $\mathbb{Z}^k$-actions}, Geom. Funct. Anal. (2019) 29: 1440--1502.

\bibitem{JQ-21}
L. Jin, Y. Qiao. \emph{The Hilbert cube contains a minimal subshift of full mean dimension}, arXiv:2102.10339.

\bibitem{JQT-21}
L. Jin, Y. Qiao, S. Tu. \emph{Mean dimension of Bernstein spaces and universal real flows}, arXiv:2108.06315.


\bibitem{Lindenstrauss-99}
E. Lindenstrauss. \emph{ Mean dimension, small entropy factors and an embedding theorem}, 
Inst. Hautes \'Etudes Sci. Publ. Math. {\bf 89} (1999), 227--262.

\bibitem{LT-14}
E. Lindenstrauss, M. Tsukamoto. \emph{Mean dimension and an embedding problem: An example}. Isr. J. Math. {\bf 199} (2014), 573--584.

\bibitem{LW-00}
E. Lindenstrauss, B. Weiss. \emph{Mean topological dimension}. Isr. J. Math. {\bf 115} (2000), 1--24 .


\bibitem{Sch-66}
L. Schwartz. \emph{ Th\'eorie des distributions}, Hermann, Paris, 1966.
 

\end{thebibliography}
\end{document}